\definecolor{linkblue}{RGB}{1,1,190}
\definecolor{citered}{RGB}{190,1,1}
\theoremstyle{plain}
\newtheorem{theorem}{\bf Theorem}[section]
\newtheorem{lemma}[theorem]{\bf Lemma}
\newtheorem{corollary}[theorem]{\bf Corollary}
\theoremstyle{definition}
\theoremstyle{definition}
\newtheorem{claim}{}[theorem]
\newcommand{\N}{\mathbb N}
\newcommand{\Z}{\mathbb Z}
 \DeclareMathOperator{\ord}{ord}
\renewcommand{\t}{\, | \,}
\newcommand{\be}{\begin{equation}}
\newcommand{\ee}{\end{equation}}
\newcommand{\ber}{\begin{eqnarray}}
\newcommand{\eer}{\end{eqnarray}}
\begin{document}
\title[Separating Noether number]{On the separating Noether number of finite abelian groups}

\author{Barna Schefler and Kevin Zhao and Qinghai Zhong}

\address{School of Mathematics and Statistics, Nanning Normal University, Nanning 530100, China, and Center for Applied Mathematics of  Guangxi, Nanning Normal University, Nanning 530100, China}
\email{zhkw-hebei@163.com}

\address{E\"otv\"os Lor\'and University, 
P\'azm\'any P\'eter s\'et\'any 1/C, 1117 Budapest, Hungary} 
\email{scheflerbarna@yahoo.com}

\address{
	Institute of Mathematics and Scientific Computing \\
	Heinrichstra{\ss}e 36\\
	8010 Graz, Austria}
\email{qinghai.zhong@uni-graz.at}
\urladdr{https://imsc.uni-graz.at/zhong/}

\thanks{This research was funded in part by National Science Foundation
	of China Grant \#12301425 and the Austrian Science Fund (FWF) [grant DOI:10.55776/P36852]}

\subjclass[2020]{13A50,11B75,20D60}
\keywords{Noether number, Separating set, minimal zero-sum sequences, Davenport constant.}

\maketitle

\begin{abstract} The separating Noether number  $\beta_{\mathrm{sep}}(G)$ of a finite group $G$ is the minimal positive integer $d$ such that for every finite $G$-module $V$ there is a separating set consisting of invariant polynomials of degree at most $d$. In this paper we use methods from additive combinatorics to investigate the separating Noether number for finite abelian groups. Among others, we obtain the exact value of  $\beta_{\mathrm{sep}}(G)$, provided that  $G$ is either a $p$-group or has rank $2$, $3$ or $5$.
\end{abstract}

\section{Introduction}

 Let $V$ be a finite-dimensional vector space over  the field of complex numbers $\mathbb{C}$ and let $G$ be a finite group. Suppose that there is a $G$-module structure on $V$. Let $x_1,...,x_k$ be a basis of the dual space $V^*$ of $V$. Then the coordinate ring $\mathbb{C}[V]$ of $V$ can be identified with the polynomial
algebra $\mathbb{C}[x_1,...,x_k]$. The $G$-module structure on $V$ induces the following $G$-action on  $\mathbb{C}[V]$:
\begin{center}
    for $g\in G$ and $f\in\mathbb{C}[V]$, $g\cdot f\in\mathbb{C}[V]$ where $g\cdot f(v)=f(g^{-1}v)$. 
\end{center}
Consider the invariant subalgebra
\[\mathbb{C}[V]^G:=\{f\in\mathbb{C}[V]:g\cdot f=f\text{ for each } g\in G\}.\]
The $G$-action preserves the standard grading of $\mathbb{C}[x_1,...,x_k]$, whence the invariant subalgebra is generated by homogeneous polynomials. A theorem of Noether states that it is finitely generated by homogeneous polynomials of degree $\leq |G|$ (see \cite{N16}). Motivated by this result, the Noether number $\beta(G)$ of a finite group $G$ is defined as the supremum of the maximal degree of an element in a minimal homogeneous generating system of $\mathbb{C}[V]^G$, where $V$ ranges over all finite dimensional $G$-modules. For finite non-abelian groups, the precise value of the Noether number $\beta (G)$ was known only for the dihedral groups and very few small groups (such as $A_4)$ (see \cite{Cz19,Cz-Do14,Cz-Do-Sz18}).

In 1991, B. Schmid \cite{Sc91a} observed that computing $\beta(G)$ for a finite abelian group $G$ can be connected to  zero-sum theory (for a survey on this interplay, we refer to \cite{Cz-Do-Ge16}). Indeed, we have $\beta(G)=\mathsf D(G)$, where the Davenport constant $\mathsf D(G)$ is the maximal length of a minimal zero-sum sequence over $G$. It is well-known that for every finite abelian group $G$ with $|G|\ge 2$, we have
\[G\cong C_{n_1}\oplus \ldots\oplus C_{n_r}\text{ with }1<n_1\t \ldots \t n_r\,,
\]
and 
\[\mathsf D(G)\ge \mathsf D^*(G):=1+\sum_{i=1}^r(n_1-1).\] 
Equality holds  for $p$-groups, rank at most $2$ groups  and some other special groups (see Lemma \ref{D}). It is still  open for groups of rank $3$ and groups of the form $C_n^r$. However, there are infinitely many groups with rank larger than $3$ such that the strict inequality holds (\cite{C20,B69, GS92, M92}). For more on the Davenport constant, one can see \cite{GG06,Ge09a,Ge-HK06a,[G], Gi18, GS19, GS20,PS11,CS14,S11}.

A branch of invariant theory studies separating systems of polynomial invariants. A subset $S\subset{\mathbb{C}}[V]^G$ is called a separating system if for any  two elements $v_1,v_2\in V$ that can be separated by a polynomial invariant, there exists an $f\in S$ such that $f(v_1)\neq f(v_2)$. For finite groups we have the following characterization:
\begin{center}
$S\subset{\mathbb{C}}[V]^G$ is a separating set if and only if for any $v, w\in V$ with $Gv \neq Gw$ there exists $f\in S$ such that $f(v) \neq f(w)$.
\end{center}
 The \textbf{separating Noether number} $\beta_{\mathrm{sep}}(G)$ of a finite group $G$ was introduced in  \cite{Ko-Kr10a} as the minimal positive integer $d$ such that for every finite $G$-module $V$ there is a separating set consisting of invariant polynomials of degree at most $d$. By \cite[Theorem 3.12.1]{derksen-kemper}, invariant polynomials of degree at most $|G|$ form a separating set, whence
 we have $\beta_{\mathrm{sep}}(G)\le |G|$.
For finite abelian groups, the exact value of $\beta_{\mathrm{sep}}(G)$ is known for groups with $\mathsf {r}(G)\le 2$ or $G\cong C_n^r$ (see \cite{Sc24a,Sc25a}). Furthermore, there is research on separating invariants by considering  $G$-module $V$  over  fields of  finite characteristic (we refer to \cite[Section 5]{Ko-Kr10a} for interesting readers).

In this paper we continue to study the exact value of the separating Noether number for finite abelian groups. The following is our main theorem.

\begin{theorem}\label{main}
	Let $G=C_{n_1}\oplus \ldots\oplus C_{n_r}$ with $1<n_1\t \ldots \t n_r$ and $r\ge 2$. Suppose $\mathsf D(n_sG)=\mathsf D^*(n_sG)$, where $s=\lfloor\frac{r+1}{2}\rfloor$.
	Then
	\[\begin{cases}
		\beta_{\mathrm{sep}}(G)= n_s+n_{s+1}+\ldots+n_r,&\mbox{ if }r \mbox{ is odd} \\
		\beta_{\mathrm{sep}}(G)\le \frac{n_{s}}{p}+n_{s+1}+\ldots+n_r,&\mbox{ if }r\mbox{ is even},\\
	\end{cases}\]
	where $p$ is the minimal prime divisor of $n_s$.
\end{theorem}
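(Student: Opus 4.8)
The plan is to recast the statement in additive-combinatorial terms and then solve the resulting extremal problem about zero-sum sequences. Because $G$ is abelian and the ground field is $\mathbb C$, every $G$-module diagonalises: a module is recorded by its weight system $W=(\chi_1,\dots,\chi_k)$ with $\chi_i\in\widehat G\cong G$, the coordinates $x_1,\dots,x_k$ are eigenvectors, and a monomial $\prod_i x_i^{a_i}$ is invariant exactly when $\sum_i a_i\chi_i=0$, its degree being $\sum_i a_i$. Thus invariant monomials correspond to zero-sum sequences over $W$ and degree to length, mirroring the identity $\beta(G)=\mathsf D(G)$. On the open dense locus where all coordinates are nonzero, two points $v,w$ lie in the same orbit iff $(w_i/v_i)_i=(\chi_i(g))_i$ for some $g\in G$, and a degree-$\le d$ invariant separates $v$ from $w$ iff some zero-sum sequence of length $\le d$ does. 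A duality argument then shows that degree-$\le d$ invariants separate all orbits on this locus precisely when the nonnegative relation vectors of length $\le d$ generate the full relation lattice $L=\{a\in\mathbb Z^k:\sum_i a_i\chi_i=0\}$ as a group (not merely as a monoid). I would first isolate this reduction, so that $\beta_{\mathrm{sep}}(G)$ becomes the largest, over all generating weight systems $W$, of the least such $d$; the non-generic points, where some $v_i=0$, I would dispatch by restricting to the submodule supported on the nonvanishing coordinates.

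For the upper bound I would exploit the subgroup $n_sG\le G$. Here $n_sG\cong C_{n_{s+1}/n_s}\oplus\cdots\oplus C_{n_r/n_s}$ captures the $r-s$ largest directions and $G/n_sG\cong C_{n_1}\oplus\cdots\oplus C_{n_s}\oplus C_{n_s}^{\,r-s}$ the remaining ones. The idea is to generate the relation lattice $L$ of an arbitrary weight system in stages: first produce the relations whose support lies over $n_sG$ using minimal zero-sum sequences of that subgroup, whose lengths the hypothesis $\mathsf D(n_sG)=\mathsf D^*(n_sG)$ pins down via Lemma~\ref{D}, and then correct in the middle and small directions, where a pairing of the factors $i\leftrightarrow r+1-i$ and the monotonicity $n_i\le n_{r+1-i}$ let one charge each pair only to its larger index. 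Carrying out this bookkeeping and checking that $\mathbb Z$-linear (not merely conical) combinations of the resulting short zero-sum vectors already recover $L$ should yield the bound $n_s+n_{s+1}+\cdots+n_r$ when $r$ is odd. When $r$ is even the central index $s$ is genuinely paired with $s+1$, and a relation in the smaller central factor can be shortened by its least prime divisor $p$, producing the improved middle term $n_s/p$ and hence the stated inequality.

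For the matching lower bound when $r$ is odd I would construct a single explicit module realising the value. Using the standard generators, I would design a weight system forcing one primitive lattice relation whose only nonnegative (zero-sum) representatives have length $n_s+n_{s+1}+\cdots+n_r$ and which admits no expression as a $\mathbb Z$-combination of strictly shorter zero-sum sequences; the pairing $i\leftrightarrow r+1-i$ again dictates the design, each pair contributing $n_{r+1-i}$ and the unpaired middle factor contributing $n_s$. Exhibiting two orbits separated only by the corresponding degree-$(n_s+\cdots+n_r)$ invariant then gives the reverse inequality and completes the odd case; no such lower bound is claimed for even $r$, consistent with the central pair allowing the $p$-saving but not an obvious sharp matching construction.

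The principal obstacle is the upper bound, and specifically the group-generation step: for an arbitrary, possibly highly redundant, weight system one must show that $L$ is generated as a group—not merely as a monoid—by zero-sum sequences of the prescribed length, which is exactly where separation is weaker than generation and where the halving phenomenon originates. Controlling uniformly over all weight systems the interaction between the $n_sG$-part, handled by the hypothesis, and the quotient part, and verifying that the middle contribution can be taken to be $n_s$ (respectively $n_s/p$) rather than the full $\mathsf D^*$ of the central block, is the delicate point, with the even/odd dichotomy and the factor $1/p$ being the residue of whether the central index is paired. Ensuring that the non-generic points never force a larger degree than the generic threshold is a secondary technical check I would carry out at the end.
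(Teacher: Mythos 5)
Your reduction to zero-sum language is sound and matches the framework the paper works in: your criterion that degree-$\le d$ invariants separate generic orbits iff the zero-sum vectors of length $\le d$ generate the relation lattice \emph{as a group} is exactly the notion of a separating atom, $A\notin\mathsf q(\mathcal B(G_0)_{|A|-1})$, and the paper gets this (plus the handling of non-generic points and a bound $|G_0|\le \mathsf r(G)+1$) for free from Domokos's work rather than reproving it. The genuine gap is everything after that. Your upper-bound plan --- ``generate the lattice in stages'' over $n_sG$ and then ``correct'' via a pairing $i\leftrightarrow r+1-i$, charging each pair to its larger index --- is not an argument; it is a restatement of the target arithmetic ($\mathsf D^*(n_sG)$ accounts for $n_{s+1}+\dots+n_r$, the middle factor for $n_s$ or $n_s/p$), and you yourself flag the middle contribution as ``the delicate point'' without resolving it. What is missing is the mechanism that actually produces the halving phenomenon. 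The paper's proof runs by contradiction: take a separating atom $A=\prod g_i^{m_i}$ of maximal length, take powers $A^l$ with $l\in[1,p-1]$, split $lm_i=k_i^{(l)}n_s+x_i^{(l)}$, and from $A^l$ manufacture \emph{two} zero-sum sequences $W^{(l)}$ (built from the remainders $x_i^{(l)}$ plus a zero-sum-free part over $n_sG$) and $V^{(l)}$ (built from the complements $t_i^{(l)}n_s-x_i^{(l)}$ with $\sum t_i^{(l)}$ minimal). Since $A$ differs from each of $W^{(l)}$, $V^{(l)}$ by elements of $\mathsf q(\mathcal B(G_0)_{|A|-1})$, separation forces $|W^{(l)}|\ge|A|$ and $|V^{(l)}|\ge|A|$, while the hypothesis $\mathsf D(n_sG)=\mathsf D^*(n_sG)$ bounds their \emph{combined} length by $n_s(2s+1-r)+2\sum_{j>s}n_j$; the factor $2$ on the left versus the single middle term on the right is precisely where the answer $\approx$ ``half of the large factors plus one middle factor'' comes from. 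No step in your staged-generation plan plays this role.

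The even-rank refinement exposes the gap even more clearly: your one-line justification (``a relation in the smaller central factor can be shortened by its least prime divisor $p$'') has no counterpart to the paper's actual argument, which needs \emph{all} multipliers $l\in[1,p-1]$, the forced equality $|X_0^{(l)}|=\mathsf D(n_sG)-1$, the resulting constraints $\frac{n_s}{p}<c^{(l)}<n_s-\frac{n_s}{p}$ on $c^{(l)}=\sum_i x_i^{(l)}-sn_s$, and a telescoping/pigeonhole contradiction $n_s-\frac{2n_s}{p}>c^{(l_1)}-c^{(l_{p-1})}>(p-2)\frac{n_s}{p}$ among the $p-1$ values. Finally, your odd-rank lower bound is likewise only a specification of what a construction should achieve; the paper imports it as a known result of Schefler, and constructing such a weight system is itself nontrivial prior work. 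So the proposal correctly identifies the framework and the shape of the answer, but the core of the proof --- the doubling inequality $|W^{(l)}|+|V^{(l)}|\ge 2|A|$ against a Davenport-constant ceiling, and the multiplier argument for the $n_s/p$ saving --- is absent.
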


So far, there is known no group $G$ of odd rank, where equality does not hold.
Note that $\mathsf D(n_sG)=\mathsf D^*(n_sG)$ holds true when $n_sG$ is a $p$-group, or has rank at most $2$,  or are of some other special cases (see Lemma \ref{D}). 
In particular, by applying Theorem \ref{main},  we have the following corollaries.

\begin{corollary}\label{co1}
	Let $G=C_{n_1}\oplus \ldots\oplus C_{n_r}$ with $1<n_1\t \ldots \t n_r$ be a $p$-group.
	Then
	\[\begin{cases}
		\beta_{\mathrm{sep}}(G)= n_s+n_{s+1}+\ldots+n_r,&\mbox{ if }r \mbox{ is odd} \\
		\beta_{\mathrm{sep}}(G)= \frac{n_{s}}{p}+n_{s+1}+\ldots+n_r,&\mbox{ if }r\mbox{ is even}\,,\\
	\end{cases}\]
where $s=\lfloor\frac{r+1}{2}\rfloor$.
\end{corollary}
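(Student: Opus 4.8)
The proof combines Theorem~\ref{main} with a lower bound that must be supplied by hand in the even case. First I would check that the hypothesis of Theorem~\ref{main} is automatic here: since $G$ is a $p$-group, its subgroup $n_sG\cong C_{n_{s+1}/n_s}\oplus\ldots\oplus C_{n_r/n_s}$ is again a $p$-group, and Lemma~\ref{D} then yields $\mathsf D(n_sG)=\mathsf D^*(n_sG)$. Consequently Theorem~\ref{main} applies verbatim. When $r$ is odd this already gives the asserted equality $\beta_{\mathrm{sep}}(G)=n_s+n_{s+1}+\ldots+n_r$, so nothing further is required. When $r$ is even, Theorem~\ref{main} delivers only the upper bound $\beta_{\mathrm{sep}}(G)\le \frac{n_s}{p}+n_{s+1}+\ldots+n_r$, and the entire remaining work is to establish the matching lower bound.

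For that lower bound I would produce a single $G$-module $V$ together with two points lying in distinct $G$-orbits that cannot be separated by any invariant of degree strictly below $\frac{n_s}{p}+n_{s+1}+\ldots+n_r$. Following the additive-combinatorial dictionary used throughout the paper, I would build $V$ as a direct sum of one-dimensional representations indexed by a multiset $U$ of characters in $\widehat G\cong G$; under this dictionary the homogeneous invariants of degree $d$ correspond to the zero-sum sequences of length $d$ over $U$, and two points of $V$ are separated by such an invariant exactly when the associated zero-sum monomial takes different values on them. The target is thus to choose $U$ so that the shortest zero-sum sequence capable of distinguishing two prescribed, orbit-inequivalent points has length precisely $\frac{n_s}{p}+n_{s+1}+\ldots+n_r$, built from the generators $e_{s+1},\ldots,e_r$ of the top factors at full length together with a shortened central block.

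The origin of the $n_s/p$ term is a $p$-torsion relation: the character $pe_s$ has order $n_s/p$, so a block of $n_s/p$ copies of $pe_s$ already sums to zero, whereas resolving $e_s$ itself would demand a block of length $n_s$. I would design $U$ so that the central factor can only be resolved through such a $p$-twisted block, driving the separating degree there down to $n_s/p$ while $e_{s+1},\ldots,e_r$ still require their full lengths. The crucial and most delicate point is to explain why this $p$-shortening is available exactly when $r$ is even and not when $r$ is odd: in the odd case the central factor is unpaired and every separating invariant must resolve it at full length $n_s$, whereas in the even case it is paired with $e_{s+1}$, and this pairing is what permits the shorter $p$-torsion block. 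Making this dichotomy rigorous—by classifying the minimal zero-sum sequences over $U$ and showing that the shortest orbit-separating one has length exactly $\frac{n_s}{p}+n_{s+1}+\ldots+n_r$ in the even case, while simultaneously verifying that the two test points genuinely lie in different $G$-orbits—is the main obstacle, and it is here that the $p$-group hypothesis (through $n_s$ being a $p$-power) enters decisively.
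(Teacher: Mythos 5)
Your verification of the hypothesis of Theorem~\ref{main} (via Lemma~\ref{D}(b) applied to the $p$-group $n_sG$) and your treatment of the odd case are correct and agree with the paper. The genuine gap is in the even case. You set out to construct the matching lower bound $\beta_{\mathrm{sep}}(G)\ge \frac{n_s}{p}+n_{s+1}+\ldots+n_r$ by hand, via a module built from one-dimensional characters and a $p$-torsion block, but the construction is never completed: you yourself flag the classification of the minimal zero-sum sequences over $U$, the proof that the shortest orbit-separating invariant has exactly the target degree, and the verification that the two test points lie in distinct orbits as ``the main obstacle,'' and you leave all of it unresolved. The heuristic that the central factor being ``paired with $e_{s+1}$'' in the even case is what permits the $p$-shortening is not made precise, and nothing in the sketch rules out a shorter separating invariant. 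As written, this is a plan for a proof, not a proof.

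Moreover, the missing work is unnecessary, because the paper already supplies the lower bound as a quotable statement: Lemma~\ref{lowerbound} gives $\beta_{\mathrm{sep}}(G)\ge \frac{n_s}{p_1}+n_{s+1}+\ldots+n_r$ for even $r$, where $p_1$ is the minimal prime divisor of $n_1$. Since $G$ is a $p$-group, every $n_i$ is a power of $p$, so $p_1=p$ is simultaneously the minimal prime divisor of $n_1$ and of $n_s$; hence the lower bound of Lemma~\ref{lowerbound} coincides with the upper bound of Theorem~\ref{main}, and equality follows. This identification of the two primes (the one attached to $n_1$ in the lower bound and the one attached to $n_s$ in the upper bound) is the only content of the paper's proof beyond citing the two results, and it is exactly the step your proposal replaces with an unfinished construction.
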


\begin{corollary}\label{co2}
	Let $G=C_{n_1}\oplus \ldots\oplus C_{n_r}$ with $1<n_1\t \ldots \t n_r$ and $r\in \{2,3,5\}$, and let $s=\lfloor\frac{r+1}{2}\rfloor$.
	Then
\[\begin{cases}
\beta_{\mathrm{sep}}(G)= \frac{n_1}{p}+n_2,&\mbox{ if }r=2\,,\\
\beta_{\mathrm{sep}}(G)= n_2+n_3,&\mbox{ if }r =3\,,\\
\beta_{\mathrm{sep}}(G)= n_3+n_4+n_5,&\mbox{ if }r =5\,. \\
\end{cases}\]
\end{corollary}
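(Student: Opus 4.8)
The plan is to derive Corollary~\ref{co2} directly from the main theorem, Theorem~\ref{main}, by verifying its hypothesis $\mathsf D(n_sG)=\mathsf D^*(n_sG)$ in each of the three rank cases and then evaluating the resulting formula. The key observation is that for $r\in\{2,3,5\}$ the relevant subgroup $n_sG$ always has small rank, so the equality $\mathsf D=\mathsf D^*$ holds by Lemma~\ref{D}. Concretely, for $G\cong C_{n_1}\oplus\ldots\oplus C_{n_r}$ and $s=\lfloor\frac{r+1}{2}\rfloor$, multiplication by $n_s$ kills every cyclic factor $C_{n_i}$ with $n_i\t n_s$, that is with $i\le s$. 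Hence the subgroup $n_sG$ is isomorphic to $\bigoplus_{i>s}(n_sC_{n_i})\cong\bigoplus_{i>s}C_{n_i/n_s}$, a group of rank $r-s$.

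First I would compute $s$ and $r-s$ in each case. For $r=2$ we have $s=1$, so $n_sG=n_1G$ has rank $r-s=1$, hence is cyclic; for $r=3$ we have $s=2$, so $n_2G$ has rank $1$ and is again cyclic; for $r=5$ we have $s=3$, so $n_3G$ has rank $2$. In all three cases the rank of $n_sG$ is at most $2$, and by Lemma~\ref{D} the equality $\mathsf D(n_sG)=\mathsf D^*(n_sG)$ holds for every group of rank at most $2$. Thus the hypothesis of Theorem~\ref{main} is satisfied unconditionally for $r\in\{2,3,5\}$, and I may invoke the theorem.

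With the hypothesis verified, I would read off the value of $\beta_{\mathrm{sep}}(G)$ from Theorem~\ref{main}. For the odd cases $r=3$ and $r=5$ the theorem gives the exact value $\beta_{\mathrm{sep}}(G)=n_s+n_{s+1}+\ldots+n_r$, which specializes to $n_2+n_3$ when $r=3$ and to $n_3+n_4+n_5$ when $r=5$; these are precisely the claimed formulas. For the even case $r=2$ the theorem only asserts the upper bound $\beta_{\mathrm{sep}}(G)\le\frac{n_1}{p}+n_2$, so to obtain the stated equality I would need a matching lower bound.

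The main obstacle is therefore establishing this lower bound in the rank-$2$ case, $\beta_{\mathrm{sep}}(G)\ge\frac{n_1}{p}+n_2$. I expect this is not proved afresh here but cited: the exact value of $\beta_{\mathrm{sep}}(G)$ for groups of rank at most $2$ is already known from \cite{Sc24a,Sc25a}, as noted in the introduction, and agrees with $\frac{n_1}{p}+n_2$. So the cleanest route is to combine the upper bound from Theorem~\ref{main} with the known rank-$2$ result to pin down equality when $r=2$. For the odd cases no such additional input is needed, since Theorem~\ref{main} already yields equality; the whole corollary then follows by specializing the formula and recording the three values.
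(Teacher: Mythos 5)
Your proposal is correct and follows essentially the same route as the paper: verify $\mathsf r(n_sG)\le 2$ so that Lemma \ref{D}(a) supplies the hypothesis of Theorem \ref{main}, read off the exact value for the odd ranks $r=3,5$, and for $r=2$ match the theorem's upper bound with a known lower bound. The only cosmetic difference is that for the $r=2$ lower bound the paper invokes its own Lemma \ref{lowerbound} (which for $s=1$ gives exactly $\frac{n_1}{p}+n_2$, with $p$ the least prime divisor of $n_1$) rather than citing the full rank-two result of \cite{Sc24a}, but this is the same input in substance.
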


Note that the above corollary for finite abelian groups of rank $2$ was obtained in \cite{Sc24a} by using a different method. If $r=4$, then the upper bound given by Theorem \ref{main} does not meet the known lower bound (see Lemma \ref{lowerbound}) , whence this case is still open.

	\medskip
In Section 2, we fix our notation and gather the required tools. 
Our main results will be proved in Section 3.

\section{Preliminaries}\label{sec:separating-Davenport}

 Let  $\mathbb N$ be  the set of positive integers, let $\N_0=\N\cup\{0\}$, and let $\Z$ be the set of integers. For integers $a, b$, we denote by
$[a, b ] = \{ x \in \mathbb Z \mid a \le x \le b\}$  the discrete, finite interval between $a$ and $b$.
Let $G$ be a finite abelian group and let $C_n$ be an additive cyclic group of order $n$. If $|G|\ge 2$, then $G \cong C_{n_1} \oplus \ldots \oplus C_{n_r}$ with $1<n_1\t n_2\t \ldots \t n_r$, where  $r = \mathsf r (G)$ is the rank of $G$ and $n_r=\exp(G)$ is the exponent of $G$.

Let $G_0 \subset G$ be a nonempty subset. In Additive Combinatorics, a { sequence} (over $G_0$) means a finite unordered sequence of terms from $G_0$ where repetition is allowed, and (as usual) we consider sequences as elements of the free abelian monoid $\mathcal F (G_0)$ with basis $G_0$. Let
\[
S = g_1 \ldots g_{\ell} = \prod_{g \in G_0} g^{\mathsf v_g (S)} \in \mathcal F (G_0)
\]
be a sequence over $G_0$, where $\mathsf v_g(S)$ is the multiplicity of $g$ in $S$. We call
\[
\begin{aligned}
|S|  &= \ell = \sum_{g \in G} \mathsf v_g (S) \in \mathbb N_0 \
\text{the \ {\it length} \ of \ $S$} \,,  \\
\sigma (S) & = \sum_{i = 1}^{\ell} g_i \ \text{the \ {\it sum} \ of \
	$S$} \,, \\
\ \ \text{ and }\ \ \  \Sigma (S) &= \Big\{ \sum_{i \in I} g_i
\mid \emptyset \ne I \subset [1,\ell] \Big\} \ \text{ the \ {\it set of
		subsequence sums} \ of \ $S$} \,.
\end{aligned}
\]
The sequence $S$ is said to be
\begin{itemize}
	\item {\it zero-sum free} \ if \ $0 \notin \Sigma (S)$,
	
	\item a {\it zero-sum sequence} \ if \ $\sigma (S) = 0$,
	
	\item a {\it minimal zero-sum sequence} (or an {\it atom}) \ if it is a nontrivial zero-sum
	sequence and every proper  subsequence is zero-sum free.
\end{itemize}
The set of zero-sum sequences $\mathcal B (G_0) = \{S \in \mathcal F (G_0) \colon \sigma (S)=0\} \subset \mathcal F (G_0)$ is a submonoid. We denote by $\mathcal A(G_0)$ the set of all atoms over $G_0$ and by
\[
\mathsf D (G_0) = \max \{ |S| \colon S \in \mathcal A (G_0) \} \in \N
\]
the { Davenport constant} of $G_0$. If $A$ is an atom over $G_0$,  then $g^{-1}A$ is zero-sum free for any $g\t A$. If $S$ is zero-sum free over $G$, then $|S|\le \mathsf D(G)-1$. 

We  collect known results concerning  the Davenport constant.
\begin{lemma}\label{D}
	Let $G$ be a finite abelian group. Then $\mathsf D(G)=\mathsf D^*(G)$, provided that one of the following holds.
	\begin{enumerate}
		\item[(a)] $r(G)\leq 2$; (\cite[Theorem 4.2.10]{Ge09a} or \cite[Theorem 5.8.3]{Ge-HK06a})
		\item[(b)] $G$ is a finite abelian $p$-group; (\cite[Theorem 2.2.6]{Ge09a} or \cite[Theorem 5.5.9]{Ge-HK06a})
		\item[(c)] $G= K \oplus C_{km}$, where $K$ is a  $p$-group with $\mathsf{D}(K)\leq m$ and $m$ is a power of $p$; (\cite[Corollary 4.2.13]{Ge09a})
		\item[(d)] $G = C_2 \oplus C_{2m} \oplus C_{2n}$ with $m|n$; (\cite{B69,GGG10})
		\item[(e)] $G = C_3 \oplus C_{6m} \oplus C_{6n}$ with $m|n$; (\cite{B69,GGG10})
		\item[(f)] $G = C_{2p^a} \oplus C_{2p^b} \oplus C_{2p^c}$ with $a\leq b \leq c$ being nonnegative integers, and $p$ being a prime; (\cite{B69})
		\item[(g)] $G = C^3_2 \oplus C_{2n}$; (\cite{B69})
		\item[(h)] $G = C^4_2 \oplus C_{2n}$ with $n\geq 70$ and $n$ even; (\cite[Theorem 5.8]{CS14})
		\item[(i)] $G = C^5_2 \oplus C_{2n}$ with $n\geq 149$ and $n$ even. (\cite[Theorem 2]{Z23})
	\end{enumerate}
\end{lemma}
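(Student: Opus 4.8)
The plan is to treat Lemma \ref{D} as what it is, a compilation of known results, for which the only nontrivial assertion in each case is the upper bound $\mathsf D(G) \le \mathsf D^*(G)$. The matching lower bound $\mathsf D(G) \ge \mathsf D^*(G)$ is uniform and elementary: fixing a basis $(e_1, \ldots, e_r)$ of $G$ with $\ord(e_i) = n_i$, the sequence $e_1^{n_1 - 1} \cdots e_r^{n_r - 1}$ is zero-sum free of length $\mathsf D^*(G) - 1$, so no structural input is needed there. Since $\mathsf D(G) = \mathsf D^*(G)$ is known to fail for some higher-rank groups (as recalled in the introduction), there is no single argument covering all nine cases; instead I would prove the two foundational cases from scratch and quote the remaining families verbatim from the cited sources.

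For the $p$-group case (b) I would use Olson's group-algebra argument, which I regard as the cleanest. Writing $G \cong C_{p^{e_1}} \oplus \cdots \oplus C_{p^{e_r}}$ with generators $x_1, \ldots, x_r$ and letting $X^g$ denote the basis element of $\F_p[G]$ attached to $g \in G$ (so that $X^g X^h = X^{g+h}$ and $X^0 = 1$), set $y_i = X^{x_i} - 1$. In characteristic $p$ the Frobenius gives $y_i^{p^{e_i}} = X^{p^{e_i} x_i} - 1 = 0$, whence $\F_p[G] \cong \F_p[y_1, \ldots, y_r]/(y_1^{p^{e_1}}, \ldots, y_r^{p^{e_r}})$; so the augmentation ideal $I = (y_1, \ldots, y_r)$ has top nonzero monomial $y_1^{p^{e_1}-1}\cdots y_r^{p^{e_r}-1}$ of degree $\mathsf D^*(G) - 1$, and therefore $I^{\mathsf D^*(G)} = 0$. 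Now for a zero-sum free sequence $S = g_1 \cdots g_\ell$ I would expand
\[
\prod_{i=1}^{\ell}\bigl(X^{g_i} - 1\bigr) = \sum_{I \subseteq [1,\ell]} (-1)^{\ell - |I|}\, X^{\sigma(S_I)}, \qquad S_I = \prod_{i \in I} g_i,
\]
and read off the coefficient of $X^0 = 1$: zero-sum freeness forces $\sigma(S_I) = 0$ only for $I = \emptyset$, so this coefficient is $(-1)^\ell \ne 0$ and the product is nonzero. But each factor lies in $I$, so the product lies in $I^\ell$, which vanishes once $\ell \ge \mathsf D^*(G)$; hence $|S| \le \mathsf D^*(G) - 1$ and $\mathsf D(G) \le \mathsf D^*(G)$.

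The rank-$2$ case (a) does not reduce to (b), and for it I would cite Olson's original argument (or the textbook treatments in \cite{Ge09a, Ge-HK06a}), whose core is a more delicate combinatorial induction on $|G|$ rather than a group-algebra computation. Case (c) I would obtain by combining (b) with the inductive cyclic-extension inequality of \cite[Corollary 4.2.13]{Ge09a}, and the special families (d)--(i) I would simply quote from \cite{B69, GGG10, CS14, Z23}, since each rests on a dedicated and partly computer-assisted analysis specific to $2$- and $3$-divisible ranks. The main obstacle is thus not any single proof but the absence of a uniform one: only the $p$-group case admits a short self-contained treatment, and it is precisely this case, applied to $n_s G$, that the hypothesis $\mathsf D(n_sG)=\mathsf D^*(n_sG)$ of Theorem \ref{main} is designed to invoke, so it is the part worth carrying in full.
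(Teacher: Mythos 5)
Your proposal is correct, and it in fact does more than the paper, whose ``proof'' of this lemma consists entirely of the inline citations in the statement: cases (a)--(i) are simply attributed to \cite{Ge09a,Ge-HK06a,B69,GGG10,CS14,Z23} with no argument given. Your treatment matches that citational approach for cases (a) and (c)--(i), and your two self-contained additions check out. The lower bound via the zero-sum free sequence $e_1^{n_1-1}\cdots e_r^{n_r-1}$ is the standard one and is valid, since any nonempty subsequence has sum $\sum a_ie_i$ with $0\le a_i\le n_i-1$ not all zero, which is nonzero because the $e_i$ are independent. Your Olson argument for case (b) is also correct in every step: the Frobenius identity $y_i^{p^{e_i}}=X^{p^{e_i}x_i}-1=0$ holds in characteristic $p$; the dimension count $|G|=\prod p^{e_i}$ upgrades the surjection $\F_p[y_1,\ldots,y_r]/(y_1^{p^{e_1}},\ldots,y_r^{p^{e_r}})\to\F_p[G]$ to an isomorphism; any monomial of degree $\ge \mathsf D^*(G)=1+\sum(p^{e_i}-1)$ must have some exponent $\ge p^{e_i}$ and hence vanishes, giving $I^{\mathsf D^*(G)}=0$; and in the expansion of $\prod_i(X^{g_i}-1)$ the coefficient of $X^0$ is exactly $\sum_{I:\,\sigma(S_I)=0}(-1)^{\ell-|I|}=(-1)^\ell\neq 0$ when $S$ is zero-sum free, since only $I=\emptyset$ contributes. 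Combined with the observation (also recorded in the paper) that deleting one term from a maximal-length atom yields a zero-sum free sequence, this gives $\mathsf D(G)\le\mathsf D^*(G)$ for $p$-groups. Your remark that case (a) does not follow from (b) and needs Olson's separate rank-two argument is likewise accurate. The one thing worth flagging is that for the purposes of this paper your extra work is optional rather than necessary -- the lemma is used only as a black box to verify the hypothesis $\mathsf D(n_sG)=\mathsf D^*(n_sG)$ of Theorem \ref{main} -- but you identified precisely the right case, (b), as the one whose proof is short enough to be worth carrying in full.
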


Let $H\subset \mathcal F(G_0)$ be a submonoid. Then $H$ is  commutative and cancellative. We denote by $\mathsf q(H)$ its quotient group. For every $\ell\in \N_0$, we denote by $\mathcal B(G_0)_{\ell}$ the submonoid generated by the set $\{A\in \mathcal A(G_0)\colon |A|\le \ell\}$ and we call an atom $A\in \mathcal A(G_0)$ is a \textbf{separating atom}  over $G_0$ if $$A\not\in \mathsf q(\mathcal B(G_0)_{|A|-1})\,,$$
that is there do not exist atoms $U_1,\ldots, U_k, V_1,\ldots, V_l\in \mathcal A(G_0)$ with $|U_i|,|V_j|\le |A|-1$ for all $i,j$ such that $AU_1\cdot\ldots\cdot U_k=V_1\cdot\ldots\cdot V_{l}$.
Set $\mathcal A_{\mathrm{sep}}(G_0)$ be the set of all separating atoms over $G_0$. Then
\begin{equation}\label{generatingsubset}
 \mathcal A_{\mathrm{sep}}(G_0) \text{ is a generating subset of  } \mathsf q(\mathcal B(G_0))\,.
\end{equation}

\begin{lemma}\label{separatingatomlength}
	Let $G$ be a finite abelian group and let $G_0\subset G$ be a nonempty subset.
 If $A$ is a separating atom over $G_0$, then $|A|\le \mathsf D^*(G)$.
\end{lemma}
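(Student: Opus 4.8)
The plan is to prove the contrapositive: if $A\in\mathcal A(G_0)$ satisfies $|A|>\mathsf D^*(G)$, then $A$ is \emph{not} a separating atom, i.e.\ I must exhibit atoms $U_1,\dots,U_k,V_1,\dots,V_l\in\mathcal A(G_0)$ of length $\le|A|-1$ with $AU_1\cdots U_k=V_1\cdots V_l$. For the combinatorial core I may assume $\langle\supp(A)\rangle=G$, since replacing $G$ by this subgroup only decreases $\mathsf D^*$ (the invariant factors of a subgroup divide those of $G$ from the top), so a bound $|A|\le\mathsf D^*(\langle\supp(A)\rangle)$ would suffice. Note also that the statement is vacuous whenever $\mathsf D(G)=\mathsf D^*(G)$ (for instance in the cases $\mathsf r(G)\le 2$ or $p$-groups of Lemma \ref{D}), since then no atom has length exceeding $\mathsf D^*(G)$; thus the real content lies in the groups with $\mathsf D(G)>\mathsf D^*(G)$.

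The engine of the proof is a single reduction move. Suppose $A$ admits a subsequence $T\t A$ with $2\le|T|\le|A|-\exp(G)$ whose sum $b:=\sigma(T)$ lies in $G_0$. Put $n=\ord(b)$ and set
\[
U=b^{\,n},\qquad A''=b\,T^{-1}A,\qquad V=T\,b^{\,n-1}.
\]
Then $U\in\mathcal A(G_0)$ has length $n\le\exp(G)\le\mathsf D^*(G)<|A|$; the sequence $A''$ is zero-sum, since $\sigma(A'')=\sigma(A)-\sigma(T)+b=0$, and has length $|A|-|T|+1\le|A|-1$; and $V$ is zero-sum of length $|T|+n-1\le|A|-1$. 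A direct comparison of multiplicities yields the key identity $A\cdot U=A''\cdot V$. Factoring $A''$ and $V$ into atoms over $G_0$ — each necessarily of length at most $|A|-1$, being a divisor of a zero-sum sequence of length $\le|A|-1$ — turns $A\cdot U=A''\cdot V$ into a relation of the required shape, so $A$ is not separating. (Equivalently, in the group $\mathsf q(\mathcal B(G_0))=\ker\bigl(\sigma\colon\mathbb Z^{(G_0)}\to G\bigr)$ this move rewrites $A$ as $A''+V-U$, with all three summands already in $\mathsf q(\mathcal B(G_0)_{|A|-1})$.)

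It remains to produce such a subsequence $T$, and this is the heart of the matter. Writing $A=h_1\cdots h_{\ell}$ with $\ell=|A|$, the partial sums $s_0=0,s_1,\dots,s_{\ell-1}$ are pairwise distinct — a coincidence $s_i=s_j$ would exhibit a proper zero-sum subsequence $h_{i+1}\cdots h_j$, contradicting minimality — so there are $\ell>\mathsf D^*(G)$ of them. Since a block $h_{i+1}\cdots h_j$ has sum $s_j-s_i$ and length $j-i$, it suffices to locate indices with $2\le j-i\le\ell-\exp(G)$ and $s_j-s_i\in G_0$; the simplest sufficient instance is a pair of terms $g,g'$ of $A$ with $g+g'\in G_0$. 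I expect this to be precisely where the value $\mathsf D^*(G)=1+\sum_{i=1}^r(n_i-1)$ is forced: a minimal zero-sum sequence all of whose admissible block-sums avoid $G_0\supseteq\supp(A)$ — in particular, one in which no element of $\supp(A)$ is a sum of at least two of its terms — ought to have its $\ell$ distinct partial sums confined to a ``sum-free'' configuration whose size is bounded by $\mathsf D^*(G)$, contradicting $\ell>\mathsf D^*(G)$. Turning this into a proof, via a pigeonhole on the $s_k$ adapted to the invariant-factor decomposition of $G$ and an exclusion of the extremal sum-free support, is the main obstacle I foresee; once it is in place, the reduction move above immediately closes the argument.
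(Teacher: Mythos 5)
Your reduction move itself is sound: granted a subsequence $T$ of $A$ with $2\le |T|\le |A|-\exp(G)$ and $b:=\sigma(T)\in G_0$, the sequences $U=b^{\ord(b)}$, $A''=bT^{-1}A$, $V=Tb^{\ord(b)-1}$ are zero-sum over $G_0$ of length at most $|A|-1$, the identity $A\cdot U=A''\cdot V$ holds, and refining $A''$, $V$ into atoms yields a relation of the forbidden shape, so $A\in\mathsf q(\mathcal B(G_0)_{|A|-1})$. But the existence of such a $T$ is not a deferrable detail: it is the entire content of the lemma, and you leave it as a hope (\emph{ought to have}, \emph{the main obstacle I foresee}). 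Note where the difficulty lives. As you observe, the lemma is vacuous unless $\mathsf D(G)>\mathsf D^*(G)$, so any proof along your lines must operate on minimal zero-sum sequences of length exceeding $\mathsf D^*(G)$ over groups of rank at least $4$ --- exactly the regime in which the structure of long minimal zero-sum sequences is a well-known open problem. Moreover $G_0$ may be precisely $\supp(A)$, so you need a block of at least two terms of $A$ whose sum is again a term of $A$; the pairwise distinctness of the partial sums $s_0,\dots,s_{\ell-1}$ gives you many distinct group elements, but nothing in your pigeonhole sketch forces any admissible difference $s_j-s_i$ into the finite set $G_0$. Finally, even if the lemma is true, a non-separating atom need not admit a witness relation of your restricted one-move form (replace one block by its sum): membership in $\mathsf q(\mathcal B(G_0)_{|A|-1})$ is witnessed in general by arbitrary integer combinations of short zero-sum sequences, so your method could fail on atoms that the lemma nevertheless covers.

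For comparison, the paper's proof is two lines and uses none of this combinatorics: it cites Domokos \cite[Proposition 3.9.(i)]{Do17a}, which states that the zero-sum sequences of length at most $\mathsf D^*(G)$ generate the whole quotient group $\mathsf q(\mathcal B(G_0))$; hence $\mathsf q(\mathcal B(G_0)_{\ell})=\mathsf q(\mathcal B(G_0))$ for every $\ell\ge \mathsf D^*(G)$, and any atom with $|A|-1\ge \mathsf D^*(G)$ automatically lies in $\mathsf q(\mathcal B(G_0)_{|A|-1})$, so it is not separating. That generation theorem is proved by a global argument in the lattice $\ker\bigl(\sigma\colon \mathbb Z^{(G_0)}\to G\bigr)$, adapted to the invariant-factor decomposition of $G$, and it exploits exactly the freedom your approach renounces: relations are built from arbitrary signed combinations of short zero-sum sequences, not from a single local rewriting inside $A$. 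To complete your write-up you should prove or cite that proposition; as it stands, your text is a strategy with its central claim unproven, not a proof.
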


\begin{proof}
	By \cite[Proposition 3.9.(i)]{Do17a}, we have $\{S\in \mathcal B(G_0)\colon |S|\le \mathsf D^*(G)\}$ is a generating subset of $\mathsf q(\mathcal B(G_0))$, whence
	\[
	\mathsf q(\mathcal B(G_0)_{\ell})=\mathsf q(\mathcal B(G_0))\text{ for all }\ell\ge \mathsf D^*(G)\,.
	\]
	Let $A$ be a separating atom over $G_0$. Assume to the contrary that $|A|-1\ge \mathsf D^*(G)$. Then the above equation implies that $A\in \mathsf q(\mathcal B(G_0))=\mathsf q(\mathcal B(G_0)_{|A|-1})$, a contradiction to the definition of separating atoms.
\end{proof}

\begin{lemma}\label{atom}
	Let $G$ be a finite abelian group. Then
 $\beta_{\mathrm{sep}}(G)$ is the maximal length of a separating atom of $\mathcal{B}(G_0)$, where $G_0\subset G$ ranges over all subsets of size $\leq \mathsf r(G)+1$.
\end{lemma}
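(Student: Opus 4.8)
The plan is to pass from the invariant-theoretic definition of $\beta_{\mathrm{sep}}(G)$ to a purely combinatorial quantity attached to the monoids $\mathcal B(G_0)$, and then to cut the supports $G_0$ down to size $\mathsf r(G)+1$. First I would set up the dictionary between modules and sequences. Since $G$ is abelian, every finite $G$-module $V$ splits into one-dimensional isotypic components, so after identifying $\widehat G \cong G$ it is encoded by its weight sequence $S = h_1 \cdots h_k$ over $G$, and $\mathbb C[V] = \mathbb C[x_1,\dots,x_k]$ carries the diagonal action $g\cdot x_i = h_i(g)\,x_i$. A monomial $x_1^{a_1}\cdots x_k^{a_k}$ is invariant exactly when $\sum_i a_i h_i = 0$, i.e. when the associated sequence is a zero-sum sequence, with degree equal to length; hence the invariant monomials are parametrized by $\mathcal B(G_0)$ with $G_0 = \supp(S)$. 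Working on the open torus where all coordinates are nonzero, the invariant monomials of degree $\le d$ separate the $G$-orbits if and only if the subgroup $\mathsf q(\mathcal B(G_0)_d)$ they generate is all of $\mathsf q(\mathcal B(G_0))$, because characters of a torus separate all points precisely when they generate the full character lattice. Running this over the closed strata (where some coordinates vanish, which corresponds to passing to subsets of $G_0$) and over all modules, I obtain the characterization
\[
\beta_{\mathrm{sep}}(G) = \sup_{G_0 \subseteq G}\ \max\{\,|A| : A \in \mathcal A_{\mathrm{sep}}(G_0)\,\},
\]
which is finite by Lemma~\ref{separatingatomlength}; this is essentially the abelian case of the general principle of \cite{Do17a} (cf. \cite{Sc24a,Sc25a}), and I would cite it rather than reprove it.

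Next I would reduce each $G_0$ to the support of the atom it produces. If $A \in \mathcal A_{\mathrm{sep}}(G_0)$ and $G_0' := \supp(A) \subseteq G_0$, then $A \in \mathsf q(\mathcal F(G_0')) = \mathbb Z^{G_0'}$ and $\mathcal B(G_0')_\ell \subseteq \mathcal B(G_0)_\ell$ for every $\ell$, so $\mathsf q(\mathcal B(G_0')_{|A|-1}) \subseteq \mathsf q(\mathcal B(G_0)_{|A|-1})$; since $A$ lies outside the larger group it also lies outside the smaller one, i.e. $A$ is already a separating atom over $\supp(A)$. Thus in the supremum above one may always take $G_0 = \supp(A)$.

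The crux, and the step I expect to be the main obstacle, is to show that a separating atom $A$ can be taken with $|\supp(A)| \le \mathsf r(G)+1$. Equivalently I want: if $A$ is an atom with $t := |\supp(A)| \ge \mathsf r(G)+2$, then $A \in \mathsf q(\mathcal B(\supp(A))_{|A|-1})$, so that $A$ is not separating. Writing $A = g_1^{a_1}\cdots g_t^{a_t}$ with distinct $g_i$ and all $a_i \ge 1$, the single-element atoms $g_i^{\ord(g_i)}$ lie in $L := \mathsf q(\mathcal B(\supp(A)))$ and span $L \otimes \mathbb Q$, but they do not by themselves express $A$ over $\mathbb Z$ (that would force $A$ to factor as a proper product of zero-sum sequences), so genuinely shorter non-principal atoms must be brought in. My plan is to exploit that the $t \ge \mathsf r(G)+2$ distinct terms of $A$ admit a nontrivial integral relation $\sum_i c_i g_i = 0$ supported on a proper subset, and to combine such a relation with the defining relation $\sum_i a_i g_i = 0$ of $A$ so as to split $A$, modulo atoms of length $<|A|$, into zero-sum sequences living on smaller supports; a careful bookkeeping of lengths (controlled by $\ord(g_i) \le \exp(G)$ and, via Lemma~\ref{separatingatomlength}, by $|A| \le \mathsf D^*(G)$) should guarantee that every atom appearing is strictly shorter than $A$. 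Making this exchange argument produce only strictly shorter atoms, uniformly once $t$ exceeds $\mathsf r(G)+1$, is the delicate point; the appearance of exactly $\mathsf r(G)+1$ reflects that $\mathsf r(G)+1$ is the largest number of weights that can occur in an irreducible, non-splittable relation over a group of rank $\mathsf r(G)$. Once this support bound is in place, combining it with the two previous paragraphs yields the asserted equality, with $G_0$ ranging over subsets of size at most $\mathsf r(G)+1$.
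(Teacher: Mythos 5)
Your proposal leaves its central step unproven. The paper itself does not reprove this lemma at all: its entire proof is a citation of \cite[Corollary 2.6]{Do17a}, so the only question is whether your blind reconstruction actually closes the argument, and it does not. Your first two steps are sound: the dictionary between $G$-modules and weight sequences, the characterization of degree-$d$ separation on the dense torus via $\mathsf q(\mathcal B(G_0)_d)=\mathsf q(\mathcal B(G_0))$, and the observation that a separating atom over $G_0$ remains separating over $\supp(A)$ (since $\mathsf q(\mathcal B(\supp(A))_{\ell})\subseteq \mathsf q(\mathcal B(G_0)_{\ell})$ inside $\mathbb Z^{G_0}$) are all correct, and citing \cite{Do17a} for the module-to-monoid translation is legitimate. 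But the step where the bound $\mathsf r(G)+1$ actually enters --- the claim that one may restrict to supports of size at most $\mathsf r(G)+1$ --- is exactly the content of Domokos's result, and for it you offer only a plan (``exploit a nontrivial integral relation \dots careful bookkeeping \dots should guarantee''), while yourself flagging it as the delicate point. A proof proposal whose crux is an acknowledged hope is not a proof.

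Beyond being incomplete, the plan aims at a statement that is stronger than what the lemma needs and is not obviously true: you want to show that \emph{every} atom with $|\supp(A)|\ge \mathsf r(G)+2$ fails to be separating, whereas the lemma only requires that the maximum length be attained on some small support. More importantly, the known proof obtains the $\mathsf r(G)+1$ bound by an entirely different mechanism: in \cite{Do17a} the reduction happens on the geometric side, before any monoid combinatorics, via a Helly-type theorem for cosets --- if $v,w$ lie in distinct orbits, then $\bigcap_i\{g\in G\colon h_i(g)v_i=w_i\}=\emptyset$, where each nonempty set in this family is a coset of $\ker h_i$, and in an abelian group of rank $r$ any family of cosets with empty intersection already contains at most $r+1$ members with empty intersection. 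This is where $\mathsf r(G)+1$ comes from; it is a statement about intersections of subgroup cosets, not about integral relations among the terms of an atom. Your proposed exchange argument on the lattice $\mathsf q(\mathcal B(\supp(A)))$ would have to reproduce this coset-geometric fact purely inside factorization theory, with uniform control that every atom produced is strictly shorter than $A$, and nothing in your sketch indicates how to force that strict inequality. As it stands, the proposal establishes the easy reductions and defers the theorem.
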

\begin{proof}
	See \cite[Corollary 2.6.]{Do17a}.
\end{proof}

\begin{lemma}\label{lowerbound}
		Let $G=C_{n_1}\oplus \ldots\oplus C_{n_r}$ with $1<n_1\t \ldots \t n_r$.
Then
\[\beta_{\mathrm{sep}}(G)\ge \begin{cases}
	n_s+n_{s+1}+\ldots+n_r,&\mbox{ if }r \mbox{ is odd}, \\
	\frac{n_{s}}{p_1}+n_{s+1}+\ldots+n_r,&\mbox{ if }r\mbox{ is even},\\
\end{cases}\]
where $s=\lfloor\frac{r+1}{2}\rfloor$ and $p_1$ is the minimal prime divisor of $n_1$. In particular, we have $$\beta_{\mathrm{sep}}(G)>n_{s+1}+\ldots+n_r\,.$$
\end{lemma}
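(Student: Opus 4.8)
The plan is to prove the lower bound by constructing, for each parity of $r$, an explicit separating atom over a suitable subset $G_0$ of size $\le r+1$, whose length equals the claimed bound. By Lemma~\ref{atom}, $\beta_{\mathrm{sep}}(G)$ is the maximal length of a separating atom over subsets of size $\le \mathsf r(G)+1$, so producing a single separating atom of the desired length suffices. The natural candidate is built from generators of the cyclic summands. Writing $G = \langle e_1\rangle \oplus \ldots \oplus \langle e_r\rangle$ with $\ord(e_i)=n_i$, I would assemble a zero-sum sequence using the high-order generators $e_{s},e_{s+1},\ldots,e_r$ (and, in the even case, a $p_1$-fold reduced version of $e_s$), choosing multiplicities so the total length matches $n_s+\ldots+n_r$ (resp.\ $\tfrac{n_s}{p_1}+n_{s+1}+\ldots+n_r$).

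**The core of the argument** is showing the constructed atom $A$ is genuinely separating, i.e.\ $A\notin \mathsf q(\mathcal B(G_0)_{|A|-1})$. The cleanest route is to exhibit a monoid homomorphism (a "weight" or character-type functional) $\varphi\colon \mathsf q(\mathcal B(G_0))\to \Z$ (or into $\R$) that takes a strictly larger value on $A$ than any $\Z$-linear combination of atoms of length $\le |A|-1$ can reach; equivalently, I would find a linear functional separating $A$ from the subgroup generated by shorter atoms. Concretely, one assigns weights to the elements $e_i$ so that every atom supported on $G_0$ of length $<|A|$ has bounded $\varphi$-value while $\varphi(A)$ exceeds that bound — forcing the non-membership. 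The pairing of indices via $s=\lfloor\frac{r+1}{2}\rfloor$ (roughly, matching low-order with high-order summands) is what makes $G_0$ small enough to respect the size constraint $|G_0|\le r+1$ while keeping the length large.

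**The main obstacle** I anticipate is verifying that $A$ is both an atom (minimality of the zero-sum) and separating simultaneously: the multiplicities must be large enough that no shorter atom combination reproduces $A$, yet the support must be a genuine minimal zero-sum with no proper zero-sum subsequence. In the even-rank case the factor $\tfrac{n_s}{p_1}$ signals that one replaces $e_s$ by $p_1 e_s$ (an element of order $n_s/p_1$), and checking that this still yields an atom of the reduced length — rather than something factoring through shorter atoms — is the delicate point. I would handle this by an explicit index computation showing the candidate lies outside $\mathsf q(\mathcal B(G_0)_{|A|-1})$, using that any representation as a product of short atoms would impose incompatible congruences on the coefficients.

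**Finally**, the ``in particular'' clause $\beta_{\mathrm{sep}}(G) > n_{s+1}+\ldots+n_r$ follows immediately: in the odd case $n_s\ge n_1>1$ adds a strictly positive term, and in the even case $\tfrac{n_s}{p_1}\ge 1>0$ does likewise, so both displayed lower bounds strictly exceed $n_{s+1}+\ldots+n_r$.
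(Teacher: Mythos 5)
Your proposal is an outline rather than a proof: every step that carries the actual mathematical content of the lemma is deferred with ``I would\dots''. The set $G_0$, the multiplicities of the candidate atom $A$, and the weight function $\varphi$ are never written down, and neither the atomicity of $A$ nor the separating property $A\notin \mathsf q(\mathcal B(G_0)_{|A|-1})$ is verified for any concrete sequence. Note that the paper itself gives no argument here either; it cites \cite[Lemmas 5.2 and 5.5]{Sc25a}, where precisely these explicit constructions are carried out, so the burden of a blind proof is exactly the part you leave open.

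Two concrete points show the gap is real and not merely a matter of detail. First, your ``natural candidate'' --- a zero-sum sequence supported on the generators $e_s, e_{s+1},\ldots, e_r$ (with $e_s$ possibly replaced by $p_1e_s$) --- cannot work: these elements are independent, so any zero-sum sequence over such a $G_0$ has each multiplicity divisible by the order of the corresponding element, and the only atoms are $e_i^{\ord(e_i)}$, of length at most $n_r$. For $r\ge 2$ this is strictly below the claimed bound $n_s+n_{s+1}+\cdots+n_r$; to obtain a single long atom one must adjoin dependent elements (sums or differences tying the low-index summands $C_{n_1},\ldots,C_{n_{s-1}}$ to the high-index ones --- the ``pairing'' you allude to), and choosing these elements together with their multiplicities is the entire difficulty of \cite[Lemmas 5.2 and 5.5]{Sc25a}. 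Second, your separation mechanism is incoherent as stated: $\mathsf q(\mathcal B(G_0)_{|A|-1})$ is a \emph{subgroup} of $\mathsf q(\mathcal B(G_0))$, so the image of any homomorphism $\varphi$ into $\Z$ restricted to it is a subgroup of $\Z$, which is unbounded unless trivial; the phrase ``$\varphi(A)$ exceeds the value any $\Z$-linear combination of shorter atoms can reach'' therefore has no meaning. The usable form of such an obstruction is a congruence: $\varphi$ must take values in $d\Z$ on every atom of length $\le |A|-1$ while $\varphi(A)\notin d\Z$ (close to your closing remark about ``incompatible congruences''), but no such $\varphi$ and $d$ are produced, and finding ones compatible with a genuinely long atom is again the heart of the matter. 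Only your last paragraph --- deducing the ``in particular'' clause from the displayed bounds via $p_1\mid n_1\mid n_s$, so that $n_s/p_1\ge 1$ --- is complete and correct.
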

\begin{proof}
	See \cite[Lemmas 5.2 and 5.5]{Sc25a}.
\end{proof}

\begin{lemma}\label{upperbound}
	Let $G=C_{n_1}\oplus \ldots\oplus C_{n_r}$ with $1<n_1\t \ldots \t n_r$.
	Then \[\beta_{\mathrm{sep}}(G)\le
	\begin{cases}
		sn_r,&\mbox{ if }r \mbox{ is odd}\,, \\
		\frac{n_r}{p}+sn_r,&\mbox{ if }r\mbox{ is even}\,,\\
	\end{cases}\]
 where $s=\lfloor\frac{r+1}{2}\rfloor$ and $p$ is the smallest prime divisor of $n_r$.
\end{lemma}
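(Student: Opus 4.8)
The plan is to reduce to a single combinatorial statement via Lemma~\ref{atom} and then to prove it by rewriting long atoms through strictly shorter ones, refining the $\mathsf D^{*}$-bound that underlies Lemma~\ref{separatingatomlength}. By Lemma~\ref{atom} it suffices to bound the length $\ell=|A|$ of a separating atom $A$ over a subset $G_0\subseteq G$ with $|G_0|\le \mathsf r(G)+1=r+1$; replacing $G_0$ by $\supp(A)$ I may assume $\supp(A)=G_0$ and set $k=|G_0|\le r+1$. Unwinding the definition, it is enough to show that every atom $A$ whose length exceeds the asserted bound is \emph{not} separating, i.e.\ that $A\in\mathsf q(\mathcal B(G_0)_{|A|-1})$: there must exist atoms $U_1,\dots,U_k,V_1,\dots,V_l$ of length $<\ell$ with $A\,U_1\cdots U_k=V_1\cdots V_l$. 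I would produce such a length-decreasing relation inside $\mathsf q(\mathcal B(G_0))$, viewed as a sublattice of $\mathbb Z^{G_0}$ graded by the length functional $S\mapsto|S|$.

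The basic engine is the family of trivial atoms $g^{\ord(g)}$ for $g\in G_0$: since $\exp(G)=n_r$, each has length $\ord(g)\mid n_r$, hence $\le n_r$. Given a proper subsequence $T\mid A$ whose sum $\sigma(T)$ lies in the cyclic subgroup $\langle g\rangle$, adjoining $g^{\ord(g)}$ to $A$ and regrouping splits $A\,g^{\ord(g)}$ into two zero-sum sequences; once $\ell$ is large both pieces have length $<\ell$, and decomposing them into atoms gives the desired relation. The source of the factor $s=\lfloor\frac{r+1}{2}\rfloor\approx r/2$ should be that the $\le r+1$ elements of $G_0$ are linearly dependent in the rank-$r$ group $G$: this dependence supplies nontrivial short atoms beyond the $g^{\ord(g)}$, and lets one clear the support of $A$ in about $(r+1)/2$ paired steps, each step removing a block of length at most one period $n_r$. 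Getting each paired step to cost at most $n_r$ (rather than the naive $2n_r$ coming from two generators) is where the halving happens, and it is subtle: per-pair multiplicity bounds such as $\mathsf v_a(A)+\mathsf v_b(A)\le n_r$ are \emph{false}, as one sees already in $C_4\oplus C_4$, where $e_1^{3}e_2^{2}(e_1+2e_2)$ is a separating atom with two terms of combined multiplicity $5>4=n_r$. Hence the rewriting cannot be purely local and must combine several shorter atoms at once, of the shape $A\cdot g^{\ord(g)}=B_1B_2$ with $B_1,B_2$ themselves nontrivial atoms.

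The hard part will be the even-rank case and the exact appearance of the smallest prime $p\mid n_r$. When $r$ is even one may have $|G_0|=r+1=2s+1$, so after $s$ paired steps a single generator remains unpaired, and a crude estimate only bounds its contribution by $n_r-1$; the sharper term $\frac{n_r}{p}$ must be extracted from the finest layer of the defect lattice $\mathsf q(\mathcal B(G_0))/\langle g^{\ord(g)}:g\in G_0\rangle$, whose relevant invariant is governed by the smallest prime divisor of the exponent. That $\frac{n_r}{p}$ is genuinely needed is visible already in rank $2$, where $\beta_{\mathrm{sep}}(C_n\oplus C_n)=\frac{n}{p}+n$: a residue class of ``size'' $n_r/p$ in the defect lattice cannot be represented by atoms shorter than one extra block of length $n_r/p$.

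Making this precise — pinpointing which generator to leave unpaired, and proving that multiplicity beyond $n_r/p$ there always forces a length-decreasing relation through the order-$p$ part of $G$ — is, I expect, the crux. The remaining steps are a careful, support-size-aware sharpening of the generation argument behind Lemma~\ref{separatingatomlength}: there the bound $\mathsf D^{*}(G)\approx r\,n_r$ comes from peeling off minimal zero-sum blocks one generator at a time, whereas here the constraint $|G_0|\le r+1$ is exactly what allows the blocks to be organized in pairs, roughly halving the degree in which $\mathsf q(\mathcal B(G_0))$ is generated.
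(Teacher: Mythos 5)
Your proposal is not a proof: it is a strategy outline whose two decisive steps are exactly the ones you leave open. After the (correct) reduction via Lemma~\ref{atom} to bounding the length of a separating atom over a set $G_0$ with $|G_0|\le r+1$, everything depends on two claims: that the elements of $G_0$ can be processed in roughly $(r+1)/2$ ``paired steps'' each costing at most $n_r$ (this is where $s=\lfloor\frac{r+1}{2}\rfloor$ would come from, rather than the factor $r$ already given by Lemma~\ref{separatingatomlength}), and that in the even case the leftover generator contributes only $\frac{n_r}{p}$. You explicitly defer both (``the hard part will be\dots'', ``is, I expect, the crux''), and your own counterexample in $C_4\oplus C_4$ --- where $\mathsf v_{e_1}(A)+\mathsf v_{e_2}(A)=5>n_r$ --- shows that the naive local version of the pairing step is false, with no replacement supplied. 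These are not finishing touches; they are the entire content of the lemma, so as written the proposal establishes nothing beyond the bound $\beta_{\mathrm{sep}}(G)\le\mathsf D^*(G)$ that follows from Lemmas~\ref{atom} and \ref{separatingatomlength}.

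You also missed the reduction that makes this lemma a two-line affair in the paper: since $n_i\mid n_r$ for every $i$, the group $G$ embeds into $G^*=C_{n_r}^r$, and by \cite[Theorem B]{Ko-Kr10a} the separating Noether number is monotone under passing to subgroups, so $\beta_{\mathrm{sep}}(G)\le\beta_{\mathrm{sep}}(G^*)$. The asserted bounds for $C_{n_r}^r$ are precisely \cite[Theorem 1.2]{Sc25a}, so nothing is left to prove. What you are attempting is in effect a reproof (indeed a generalization, since you work with arbitrary $G_0\subset G$) of that cited theorem of Schefler, which is a genuinely hard result. If you want to see the pairing idea and the smallest-prime term actually carried out, the right model is the proof of Theorem~\ref{main} in this paper: there the halving comes from playing the two zero-sum sequences $W^{(l)}$ and $V^{(l)}$ against each other via the inequalities \eqref{W} and \eqref{V}, the term $\frac{n_s}{p}$ comes from a residue argument modulo $n_s$ applied to the quantities $c^{(l)}$ for $l\in[1,p-1]$, and even then the argument needs the extra hypothesis $\mathsf D(n_sG)=\mathsf D^*(n_sG)$, which is unavailable in the generality you set up.
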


\begin{proof}
	Let $G^*=C_{n_r}^r$. Then $G$ is isormorphic to a subgroup of $G^*$, whence $\beta_{\mathrm{sep}}(G^*)\ge \beta_{\mathrm{sep}}(G)$ by \cite[Theorem B]{Ko-Kr10a}. It suffices to show that the upper bound assertion holds for $G^*$ and this follows from \cite[Theorem 1.2]{Sc25a}.
\end{proof}


\section{Proof of the main theorem and its corollaries}

\begin{proof}[Proof of Theorem \ref{main}]
	If $n_sG$ is trivial, then $n_s=\exp(G)=n_r$ and hence $n_s=n_{s+1}=\ldots=n_r$. The assertion now follows from Lemmas \ref{upperbound} and \ref{lowerbound}.
	
	Suppose $n_sG$ is not trivial. Then the assumption $\mathsf D(n_sG)=\mathsf D^*(n_sG)$ implies that
	\begin{equation}\label{eq*}
		\mathsf D(n_sG)=\mathsf D^*(n_sG)=1+\sum_{j=s+1}^r(\frac{n_j}{n_s}-1)=1+\sum_{j=s}^r(\frac{n_j}{n_s}-1)\,.
	\end{equation}
	 By Lemma \ref{atom}, we may choose a subset $G_0\subset G$  with $|G_0|\le r+1$ such that there exists a separating atom $A$ over $G_0$ with $|A|=\beta_{\mathrm{sep}}(G)$. Set $G_0=\{g_1,g_2,\ldots,g_{|G_0|}\}$ and $G_1=\{n_sg\colon g\in G_0\}$. We define a map 
	 \[
	 \varphi\colon \{S\in \mathcal F(G_0)\colon n_s\t \mathsf v_g(S) \text{ for each }g\in G_0\}\rightarrow \mathcal F(G_1) 
	 \] by $\varphi(\prod_{g\in G_0}g^{n_sy_g})=\prod_{g\in G_0}(n_sg)^{y_g}$, where $y_g, g\in G_0$ are non-negative integers. For every sequence $T$ over $G_1$, $\varphi^{-1}(T)$ is the set of all sequences $S$ with $n_s\t \mathsf v_g(S)$ for each $g\in G_0$ such that $\varphi(S)=T$.

	\begin{claim}\label{cl1}
		Let $S\in \mathcal B(G_0)$ with $n_s\t \mathsf v_g(S)$ for each $g\in G_0$. Then $S\in \mathsf q(\mathcal B(G_0)_{|A|-1})$.
	\end{claim}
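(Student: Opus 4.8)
The plan is to transfer the problem along the multiplication-by-$n_s$ map $\varphi$ into the subgroup $n_sG$, where the Davenport constant is pinned down by the hypothesis. First I would set $T=\varphi(S)\in\mathcal F(G_1)$. Since $\sigma$ is compatible with $\varphi$ we get $\sigma(T)=\sigma(S)=0$, so $T$ is a zero-sum sequence over $G_1\subset n_sG$ and therefore factors into atoms $T=B_1\cdot\ldots\cdot B_m$ over $G_1$, each of length $|B_i|\le\mathsf D(G_1)\le\mathsf D(n_sG)=\mathsf D^*(n_sG)$ by \eqref{eq*}.

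Next I would lift each atom $B_i$ back to a sequence over $G_0$. The key (and soft) observation is that every preimage $\tilde B_i\in\varphi^{-1}(B_i)$ is automatically a zero-sum sequence: the sum of a lifted sequence depends only on the elements $n_sg$ and not on the chosen preimages $g$, so $\sigma(\tilde B_i)=\sigma(B_i)=0$. Moreover, because the multiplicities $\mathsf v_g(S)$ are divisible by $n_s$ and, over each fibre $\{g\in G_0:n_sg=h\}$, the values $\mathsf v_g(S)/n_s$ sum to $\mathsf v_h(T)=\sum_i\mathsf v_h(B_i)$, one can choose the preimages consistently. Indeed, for each $h\in G_1$ this is a transportation problem between the fibre (supplies $\mathsf v_g(S)/n_s$) and the indices $i$ (demands $\mathsf v_h(B_i)$) with equal total supply and demand, hence solvable over $\N_0$; solving all of them simultaneously yields lifts with $\tilde B_1\cdot\ldots\cdot\tilde B_m=S$ on the nose. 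Thus $S$ is a product of the zero-sum sequences $\tilde B_i$, each of length $|\tilde B_i|=n_s|B_i|\le n_s\mathsf D^*(n_sG)$. In particular no ``correction'' terms arising from the non-injectivity of $\varphi$ are needed.

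The crux is the length estimate $n_s\mathsf D^*(n_sG)\le|A|-1$, which places every $\tilde B_i$ in $\mathcal B(G_0)_{|A|-1}$ and hence gives $S=\tilde B_1\cdot\ldots\cdot\tilde B_m\in\mathcal B(G_0)_{|A|-1}\subset\mathsf q(\mathcal B(G_0)_{|A|-1})$, as claimed. Here I would use \eqref{eq*} to compute $n_s\mathsf D^*(n_sG)=n_s+\sum_{j=s+1}^r(n_j-n_s)$ and combine it with the lower bound $|A|=\beta_{\mathrm{sep}}(G)>n_{s+1}+\ldots+n_r$ from Lemma \ref{lowerbound}, obtaining
\[
|A|-1-n_s\mathsf D^*(n_sG)\ \ge\ (r-s-1)\,n_s\ \ge\ 0 ,
\]
which holds for both parities, since $r-s=s-1$ when $r$ is odd (so the coefficient is $s-2\ge 0$ as $r\ge 3$ forces $s\ge 2$) and $r-s=s$ when $r$ is even (so the coefficient is $s-1\ge 0$). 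I expect this uniform length comparison to be the main point of the argument; once it is in place the structural lifting step is routine, precisely because lifts of atoms are automatically zero-sum.
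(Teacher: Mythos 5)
Your proof is correct, and it reaches the stated claim by a genuinely different route at the key decomposition step. The paper factors $\varphi(S)$ in the quotient group $\mathsf q(\mathcal B(G_1))$ as a product of \emph{separating} atoms and their inverses (via \eqref{generatingsubset}), and bounds each factor by Lemma \ref{separatingatomlength}; since that lemma gives $|U_i|\le \mathsf D^*(n_sG)$ unconditionally, the paper's argument never actually consumes the hypothesis $\mathsf D(n_sG)=\mathsf D^*(n_sG)$. You instead factor $T=\varphi(S)$ into honest atoms over $G_1$, using atomicity of $\mathcal B(G_1)$, and bound their lengths by $\mathsf D(G_1)\le\mathsf D(n_sG)=\mathsf D^*(n_sG)$, which does use \eqref{eq*} -- legitimate here, since the claim sits inside the proof of Theorem \ref{main} where that hypothesis is in force. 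Your route buys two things: because no inverses appear, you obtain the stronger conclusion $S\in\mathcal B(G_0)_{|A|-1}$ rather than mere membership in $\mathsf q(\mathcal B(G_0)_{|A|-1})$; and your transportation-problem argument makes fully rigorous the lifting step that the paper compresses into ``choosing suitable subsequences $\varphi^*(U_i)$'' (a step that is in fact more delicate in the paper's version, where the choices must be made consistently across both sides of an identity involving inverses). Your length estimate $n_s\mathsf D^*(n_sG)=\sum_{j=s+1}^r n_j-(r-s-1)n_s\le|A|-1$, obtained from Lemma \ref{lowerbound} together with $(r-s-1)\ge 0$ in both parities, coincides with the paper's computation. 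The only trade-off is that the paper's version of the claim is hypothesis-free and hence marginally more portable; within this theorem the two arguments are interchangeable.
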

	\begin{proof}[Proof of \ref{cl1}]
	Since $\varphi(S)$ is a zero-sum sequence over $G_1$, it follows from  \eqref{generatingsubset} that we may factor $\varphi(S)=U_1\cdot\ldots\cdot U_{\ell}\cdot U_{\ell+1}^{-1}\cdot\ldots\cdot U_k^{-1}$, where $U_1,\ldots, U_{k}\in \mathcal A_{\mathrm{sep}}(G_1)$. Therefore by choosing suitable subsequences $\varphi^*(U_i)$ from the set $\varphi^{-1}(U_i)$ for each $i\in [1,k]$, we have that 
	\[
	S=\varphi^*(U_1)\cdot\ldots\cdot \varphi^*(U_{\ell})\cdot \varphi^*(U_{\ell+1})^{-1}\cdot\ldots\cdot \varphi^*(U_k)^{-1}.
	\]
	In view of  Lemmas \ref{separatingatomlength} and \ref{lowerbound}, for every $i\in [1,k]$, we have
 $$|\varphi^*(U_i)|=n_s|U_i|\le n_s\mathsf D^*(n_sG)= \sum_{j=s+1}^rn_j-(r-s-1)n_s\le \beta_{\mathrm{sep}}(G)-1=|A|-1\,.$$ Now the assertion follows.
	\qedhere(\ref{cl1})	
	\end{proof}

	 We may assume that $A=\prod_{i=1}^{|G_0|}g_i^{m_i}$, where $m_i\in \N_0$ for every $i\in [1,|G_0|]$. Let $p$ be the minimal prime divisor of $n_s$ and let $l\in [1, p-1]$.
	For every $i\in [1, |G_0|]$, there exist $k_i^{(l)}\in \N_0$ and $x_i^{(l)}\in [0, n_s-1]$ such that $lm_i=k_i^{(l)}n_s+x_i^{(l)}$.

	If $x_i^{(1)}=0$ for every $i\in [1,|G_0|]$, then $A\in \mathcal B(G_0)$ with $n_s\t \mathsf v_g(S)$ for every $g\in G_0$. It follows from \ref{cl1} that $A\in \mathsf q(\mathcal B(G_0)_{|A|-1})$, a contradiction to that $A$ is a separating atom.
	 Thus there exists some $i_0\in [1, |G_0|]$ such that  $x_{i_0}^{(1)}\neq 0$. Since $\gcd(l,n_s)=1$ and $x_{i_0}^{(l)}\equiv lx_{i_0}^{(1)}\pmod {n_s}$, we have
\[
 x_{i_0}^{(l)}\neq 0\,.
\]

Let $A^{(l)}=\prod_{i=1}^{|G_0|}g_i^{k_i^{(l)}n_s}$. Then $\varphi(A^{(l)})$ is a sequence over $n_sG$, and hence we can write $\varphi(A^{(l)})=X_0^{(l)}\cdot X_1^{(l)}$, where $X_1^{(l)}$ is a zero-sum sequence over $n_sG$ (note that $X_1^{(l)}$ is trivial when $l=1$) and $X_0^{(l)}$ is a zero-sum free sequence over $n_sG$. We may choose suitable $\varphi^*(X_0^{(l)})\in \varphi^{-1}(X_0^{(l)})$ and $\varphi^*(X_1^{(l)})\in \varphi^{-1}(X_1^{(l)})$ such that $A^{(l)}=\varphi^*(X_0^{(l)})\varphi^*(X_1^{(l)})$.
It follows from \eqref{eq*} that
\begin{equation}\label{X}
	|\varphi^*(X_0^{(l)})|=n_s|X_0^{(l)}|\le n_s(\mathsf D(n_sG)-1)=\sum_{j=s+1}^rn_j-(r-s)n_s\,.
\end{equation}
 Set $W^{(l)}=\varphi^*(X_0^{(l)})\prod_{i=1}^{|G_0|}g_i^{x_i^{(l)}}$. Then $W^{(l)}=\varphi^*(X_1^{(l)})^{-1}A^l$ is zero-sum.
Let $l'\in [1,n_s-1]$ such that $ll'\equiv 1\pmod {n_s}$. Then there exists  $h^{(l)}\in \N_0$ such that $ll'=1+h^{(l)}n_s$.
It follows that
\[
A=A^{ll'-h^{(l)}n_s}=(A^{h^{(l)}n_s})^{-1}(A^l)^{l'}=(A^{h^{(l)}n_s})^{-1}(\varphi^*(X_1^{(l)})\cdot W^{(l)})^{l'}\,.
\]
By \ref{cl1}, we have  $A^{h^{(l)}n_s}\in \mathsf q(\mathcal B(G_0)_{|A|-1})$
and $\varphi^*(X_1^{(l)})\in \mathsf q(\mathcal B(G_0)_{|A|-1})$.
Since $A$ is a separating atom, we obtain
\begin{equation}\label{W}
	|W^{(l)}|\ge |A|\,.
\end{equation}

Note that $m_i\le \ord(g_i)-1$. We have $k_i^{(l)}\le l\ord(g_i)-1$ and hence
\[
\sigma\left(\prod_{i=1}^{|G_0|}g_i^{(l\ord(g_i)-k_i^{(l)})n_s-x_i^{(l)}}\right)=\sigma\left(\prod_{i=1}^{|G_0|}g_i^{l\ord(g_i)n_s}\right)-\sigma(A^l)=0\,,
\]
which means that there exist $t_1,\ldots, t_{|G_0|}\in \N$ such that $\sigma\left(\prod_{i=1}^{|G_0|}g_i^{t_in_s-x_i^{(l)}}\right)=0$.
We may choose a tuple $(t_1^{(l)},\ldots, t_{|G_0|}^{(l)})\in \N^{|G_0|}$ with $\sum_{j=1}^{|G_0|}t_j^{(l)}$ minimal such that $\sigma\left(\prod_{i=1}^{|G_0|}g_i^{t_i^{(l)}n_s-x_i^{(l)}}\right)=0$.
Set $$V^{(l)}=\prod_{i=1}^{|G_0|}g_i^{t_i^{(l)}n_s-x_i^{(l)}} \quad \text{ and }\quad   Y^{(l)}=\prod_{i=1}^{|G_0|}g_i^{(t_i^{(l)}-1)n_s}\,.$$
Assume to the contrary that $\varphi(Y^{(l)})$ is not zero-sum free. Then $\varphi(Y^{(l)})$ has a nontrivial zero-sum subsequence, say  $\prod_{i=1}^{|G_0|}(n_sg_i)^{t_i'}$, where $t_i'\in [0, t_i^{(l)}-1]$ for every $i\in [1,|G_0|]$. Therefore $\sum_{i=1}^{|G_0|}t_i'>0$, $t_i^{(l)}-t_i'\ge 1$ for every $i\in [1,|G_0|]$, and
\[
\sigma\left(\prod_{i=1}^{|G_0|}g_i^{(t_i^{(l)}-t_i')n_s-x_i^{(l)}}\right)=\sigma(V^{(l)})-\sigma\left(\prod_{i=1}^{|G_0|}(n_sg_i)^{t_i'}\right)=0\,.
\]
We obtain a contradiction to the minimality of $\sum_{i=1}^{|G_0|}t_i^{(l)}$. Thus $\varphi(Y^{(l)})$ is zero-sum free and it follows from \eqref{eq*} that
\begin{equation}\label{eq2}
	n_s(\sum_{i=1}^{|G_0|}(t_i^{(l)}-1))=|Y^{(l)}|=n_s|\varphi(Y^{(l)})|\le n_s(\mathsf D(n_sG)-1)=\sum_{j=s+1}^{r}n_j-(r-s)n_s\,.
\end{equation}
Note that
\[
A=A^{ll'-h^{(l)}n_s}=(A^{h^{(l)}n_s})^{-1}(A^l)^{l'}=(A^{h^{(l)}n_s})^{-1}(V^{(l)})^{-l'}(A^lV^{(l)})^{l'}\,.
\]
By \ref{cl1}, we have  $A^{h^{(l)}n_s}\in \mathsf q(\mathcal B(G_0)_{|A|-1})$
and $A^lV^{(l)}=\prod_{i=1}^{|G_0|}g_i^{(k_i^{(l)}+t_i^{(l)})n_s}\in \mathsf q(\mathcal B(G_0)_{|A|-1})$.
Since $A$ is a separating atom, we obtain
\begin{equation}\label{V}
	|V^{(l)}|\ge |A|\,.
\end{equation}
Then \eqref{W}, \eqref{V}, \eqref{X}, and \eqref{eq2} imply
\begin{align*}
2|A|\le & |W^{(l)}|+|V^{(l)}|=|\varphi^*(X_0^{(l)})|+n_s\sum_{i=1}^{|G_0|}t_i^{(l)}\\
	&\le \sum_{j=s+1}^rn_j-(r-s)n_s+\sum_{j=s+1}^rn_j+(s+1)n_s\\
	&=n_s(2s+1-r)+2\sum_{j=s+1}^rn_j
\end{align*}

Suppose $r$ is odd. Then $r=2s-1$ and hence $\beta_{\mathrm{sep}}(G)=|A|\le \sum_{j=s}^rn_j$.
 Now the assertion follows from Lemma \ref{lowerbound}.

 Suppose $r$ is even. Then $r=2s$ and hence
 \begin{equation}\label{W+V}
 	2|A|\le |W^{(l)}| + |V^{(l)}| \le n_s+2\sum_{j=s+1}^rn_j\,.
 \end{equation}
 Furthermore, if $|X_0^{(l)}|\le \mathsf D(n_sG)-2$, then  $|\varphi^*(X_0^{(l)})|\le \sum_{j=s+1}^rn_j-(r-s+1)n_s$ and hence
 $2\beta_{\mathrm{sep}}(G)=2|A|\le 2\sum_{j=s+1}^rn_j<2\beta_{\mathrm{sep}}(G)$ by Lemma \ref{lowerbound}, a contradiction.
 Thus $|X_0^{(l)}|=\mathsf D(n_sG)-1$ and hence
 \begin{equation}\label{eqX0}
 	|\varphi^*(X_0^{(l)})|=\sum_{j=s+1}^rn_j-(r-s)n_s=\sum_{j=s+1}^rn_j-sn_s\,.
 \end{equation}

 Assume to the contrary that $\beta_{\mathrm{sep}}(G)=|A|>\frac{n_s}{p}+n_{s+1}+\ldots+n_r$. Then
 $V^{(l)}\ge |A|>\frac{n_s}{p}+n_{s+1}+\ldots+n_r$ and hence  (in view of \eqref{W+V})
 \begin{align*}
 &\frac{n_s}{p}+n_{s+1}+\ldots+n_r< |A|\le |W^{(l)}|\\
 =&|\varphi^*(X_0^{(l)})|+\sum_{i=1}^{|G_0|}x_i^{(l)}
 = (|W^{(l)}| + |V^{(l)}|)-|V^{(l)}|
 < n_{s}+\ldots+n_r-\frac{n_s}{p}\,.
 \end{align*}
 It follows from \eqref{eqX0} that
 \[
 \frac{n_s}{p}<\sum_{i=1}^{|G_0|}x_i^{(l)}-sn_s<n_s-\frac{n_s}{p}\quad  \text{ holds for every }l\in [1,p-1]\,,
 \]
whence $p\ge 3$.
Set $c^{(l)}=\sum_{i=1}^{|G_0|}x_i^{(l)}-sn_s$ for every $l\in [1,p-1]$. Let $l',l^*\in [1,p-1]$ be distinct such that $c^{(l')}\ge c^{(l^*)}$. Then  $|l'-l^*|\in [1,p-1]$ and hence
 \begin{align*}
n_s-2\frac{n_s}{p}>c^{(l')}-c^{(l^*)}=\sum_{i=1}^{|G_0|}x_i^{(l')}-\sum_{i=1}^{|G_0|}x_i^{(l^*)}
\begin{cases}
\equiv c^{(l'-l^*)} \pmod {n_s}, &\text{ if } l'>l^*,\\
\equiv n_s-c^{(l^*-l')} \pmod {n_s}, &\text{ if } l^*>l'.
\end{cases}
 \end{align*}
 In view of $\frac{n_s}{p}<c^{(|l'-l^*|)}<n_s-\frac{n_s}{p}$, we have
 \[
   c^{(l')}-c^{(l^*)}>\frac{n_s}{p}\,.
 \]
 Suppose $l_1,l_2,\ldots,l_{p-1}\in [1, p-1]$ are distinct such that
 $c^{(l_1)}>c^{(l_2)}>\ldots>c^{(l_{p-1})}$.
 It follows that
 \[
  n_s-2\frac{n_s}{p}> c^{(l_1)}-c^{(l_{p-1})}=\sum_{j=1}^{p-2}\left(c^{(l_{j})}-c^{(l_{j+1})}\right)> (p-2)\frac{n_s}{p}=n_s-2\frac{n_s}{p}\,,
 \]
 a contradiction.
\end{proof}

\begin{proof}[Proof of Corollary \ref{co1}]	
	Since $G$ is a $p$-group,   $n_sG$ is  a $p$-group and hence Lemma \ref{D}.(b) implies that the assumption of Theorem \ref{main} holds for $n_sG$.
	Since $p\t n_1$ and $p$ is the least prime divisor dividing $n_{s}$, we can now apply Theorem \ref{main} and Lemma \ref{lowerbound} to obtain the assertion.
\end{proof}

\begin{proof}[Proof of Corollary \ref{co2}]	
	Since $r\in \{2,3,5\}$, we have $\mathsf r(n_sG)\le 2$ and Lemma \ref{D}.(a) implies that the assumption of Theorem \ref{main} holds for $n_sG$. If $r$ is odd,  then the assertion follows by applying Theorem \ref{main}. If $r=2$, then the assertion follows from Theorem \ref{main} and Lemma \ref{lowerbound}
\end{proof}

\providecommand{\bysame}{\leavevmode\hbox to3em{\hrulefill}\thinspace}
\providecommand{\MR}{\relax\ifhmode\unskip\space\fi MR }
\providecommand{\MRhref}[2]{%
	\href{http://www.ams.org/mathscinet-getitem?mr=#1}{#2}
}
\providecommand{\href}[2]{#2}

\end{document}